\theoremstyle{definition}
\newtheorem{defi}{Definition}
\theoremstyle{plain}
\newtheorem{thm}[defi]{Theorem}
\newtheorem{cor}[defi]{Corollary}
\newtheorem{prop}[defi]{Proposition}
\newtheorem{ques}[defi]{Question}
\newtheorem{ex}[defi]{Example}
\theoremstyle{remark}
\newtheorem{sclaim}{Claim}[defi]
\newcounter{enuroman}
\renewcommand{\theenuroman}{\roman{enuroman}}
\newcounter{enuRoman}
\renewcommand{\theenuRoman}{(\Roman{enuRoman})}
\newcounter{enuAlph}
\renewcommand{\theenuAlph}{\Alph{enuAlph}}
\newcounter{enualph}
\renewcommand{\theenualph}{\alph{enualph}}
\newcounter{enuarabic}
\renewcommand{\theenuarabic}{\arabic{enuarabic}}
\newcommand{\re}{{\upharpoonright}}
\newcommand{\I}{{\cal I}}
\newcommand{\J}{{\cal J}}
\newcommand{\X}{{\cal X}}
\newcommand{\FM}{{\mathbb{FM}}}
\newcommand{\LL}{{\mathbb L}}
\newcommand{\MM}{{\mathbb M}}
\newcommand{\RR}{{\mathbb R}}
\renewcommand{\SS}{{\mathbb S}}
\newcommand{\TT}{{\mathbb T}}
\newcommand{\VV}{{\mathbb V}}
\newcommand{\bb}{{\mathfrak b}}
\newcommand{\cc}{{\mathfrak c}}
\newcommand{\dd}{{\mathfrak d}}
\newcommand{\add}{{\mathsf{add}}}
\newcommand{\cov}{{\mathsf{cov}}}
\newcommand{\non}{{\mathsf{non}}}
\newcommand{\cof}{{\mathsf{cof}}}
\newcommand{\stem}{{\mathrm{stem}}}
\newcommand{\dom}{{\mathrm{dom}}}
\renewcommand{\succ}{{\mathrm{succ}}}
\newcommand{\sub}{\subseteq}
\newcommand{\sem}{\setminus}
\newcommand{\twoom}{2^\omega}
\newcommand{\twolom}{2^{<\omega}}
\newcommand{\omlom}{\omega^{<\omega}}
\newcommand{\omom}{\omega^\omega}
\newcommand{\ha}{\,{}\hat{}\,}
\newcommand{\Loleriar}{\mbox{$\Longleftrightarrow$}}
\title{Cofinalities of Marczewski-like ideals}
\author{J\"org Brendle\thanks{Partially supported by Grants-in-Aid for Scientific Research
   (C) 24540126 and (C) 15K04977, Japan Society for the Promotion of Science, 
   by JSPS and FWF under the Japan-Austria Research Cooperative Program 
   {\em New developments regarding forcing in set theory}, and
   by the Isaac Newton Institute for Mathematical Sciences in the programme {\em Mathematical, 
   Foundational and Computational Aspects of the Higher Infinite} (HIF) funded by EPSRC grant EP/K032208/1.}  \\
   Graduate School of System Informatics \\
   Kobe University \\
   Rokko-dai 1-1, Nada-ku \\
   Kobe 657-8501, Japan \\
   email: {\sf brendle@kobe-u.ac.jp} \\  \\
   and \\ \\
   Yurii Khomskii\thanks{Partially supported by the Isaac Newton Institute for Mathematical Sciences in the programme {\em Mathematical, 
   Foundational and Computational Aspects of the Higher Infinite} (HIF) funded by EPSRC grant EP/K032208/1.\newline
   \indent {\it 2010 Mathematics Subject Classification.} Primary 03E17; Secondary 03E40, 03E50 \newline
   \indent {\it Key Words.} Laver forcing, Miller forcing, tree forcing, Marczewski ideal, nowhere Ramsey ideal, Laver ideal, Miller ideal,
   tree ideal, cofinality of an ideal} \\  
   Hamburg University \\
   Bundesstra{\ss}e 55 (Geomatikum) \\
   20146 Hamburg, Germany \\
   email: {\sf yurii@deds.nl} \\ \\
   and \\ \\
   Wolfgang Wohofsky\footnotemark[2] \\
   Hamburg University \\
   Bundesstra{\ss}e 55 (Geomatikum) \\
   20146 Hamburg, Germany \\
   email: {\sf wolfgang.wohofsky@gmx.at}
}
\begin{document}
\maketitle

\begin{abstract}
\noindent We show that the cofinalities of both the Miller ideal $m^0$ (the $\sigma$-ideal naturally related to Miller forcing $\MM$) 
and the Laver ideal $\ell^0$ (related to Laver forcing $\LL$) are larger than the size of the continuum $\cc$ in ZFC.
\end{abstract}



\section{Introduction}

The purpose of this note is to prove (in ZFC) that the ideals naturally related to Laver forcing $\LL$ and to Miller forcing
$\MM$, the {\em Laver ideal} $\ell^0$ and the {\em Miller ideal} $m^0$, have cofinality strictly larger than $\cc$, the size of
the continuum  (Corollary~\ref{Laver-Miller-cof} below). We will phrase our result in a more general framework and show that 
$\cof (t^0) > \cc$ holds for all tree ideals $t^0$ derived from tree forcings $\TT$ satisfying a certain property (Theorem~\ref{sdap} in Section 3).
This was known previously for the Marczewski ideal $s^0$~\cite{JMS92} and the nowhere Ramsey ideal $r^0$~\cite{Ma}, but it is unclear whether 
the method of proof for these two ideals works for $\ell^0$ and $m^0$ (see the discussion in Section 2), and our approach is
more general.

For a subtree $T \sub \omlom$ (or $T \sub \twolom$),
 $[T] = \{ x \in \omom : x \re n \in T$ for all $n \}$ denotes the set of branches through $T$.

\begin{defi}[Combinatorial tree forcing]
A collection $\TT$ of subtrees of $\omlom$ is a {\em combinatorial tree forcing} if 
\begin{enumerate}
\item $\omlom \in \TT$,
\item (closure under subtrees) if $T \in \TT$ and $s \in T$, then the tree $T_s = \{ t \in T : s \sub t$ or $t \sub s \}$ also
   belongs to $\TT$,
\item (large disjoint antichains) there is a continuous function $f : \omom \to \twoom$ such that
   for all $x \in \twoom$, $f^{-1} ( \{ x \} )$ is the set of branches of a tree in $\TT$,
\item (homogeneity) if $T \in \TT$, then there is an order-preserving injection $i : \omlom \to T$ such that the map $g: \omom \to [T]$ 
   given by $g(x) = \bigcup \{ i (x \re n) : n \in \omega \}$ is a homeomorphism and for any subtree $S \sub \omlom$,
   $S \in \TT$ iff the downward closure of $i (S)$ belongs to $\TT$.
\end{enumerate}
$\TT$ is ordered by inclusion, that is, for $S,T \in \TT$, $S \leq T$ if $S \sub T$.
\end{defi}

Homogeneity says that the forcing looks the same below each condition. 
In view of homogeneity, ``large disjoint antichains" implies that 
\begin{enumerate} 
\setcounter{enumi}{4}
\item each $T \in \TT$ splits into continuum many trees with pairwise disjoint sets of branches, that is,
there are $T_\alpha \in \TT$, $\alpha < \cc$, with $T_\alpha \sub T$ such that $[T_\alpha] \cap [T_\beta] = \emptyset$
for $\alpha \neq \beta$. 
\end{enumerate}
In particular, there are $\cc$-sized antichains so that $\TT$ is not ccc and forcing notions like Cohen and
random forcing do not fit into this framework.

For forcing notions whose conditions are subtrees of $\twolom$ like Sacks forcing $\SS$, an analogous 
definition applies, with $\omlom$ and $\omom$ replaced by $\twolom$ and $\twoom$, respectively.

\begin{defi}[Tree ideal]
The {\em tree ideal} $t^0$ associated with the combinatorial tree forcing $\TT$ consists of all $X \sub \omom$ such that 
for all $T \in \TT$ there is $S \leq T$ with $X \cap [S] = \emptyset$.
\end{defi}

When $\TT = \SS$ is Sacks forcing, $t^0 = s^0$ is the well-known {\em Marczewski ideal}, and for Mathias
forcing $\TT = \RR$, $t^0 = r^0$ is the ideal of {\em nowhere Ramsey} sets. {\em Laver forcing} $\LL$~\cite{La76} consists
of trees $T \sub \omlom$ such that for all $t \in T$ containing the stem of $T$, $\stem (T)$, the set of successors $\succ_T (t) = \{
n \in \omega : t \ha n \in T \}$ is infinite. {\em Miller forcing} $\MM$~\cite{Mi84} consists of trees $T \sub \omlom$
such that for all $t \in T$ there is $s \supseteq t$ in $T$ such that $\succ_T (s)$ is infinite. 
Note that $\LL$ and $\MM$ are combinatorial tree forcings in the above sense for 
$f : \omom \to \twoom$ given by $f(x) (n) = x  (n) \mod 2$ for all $x \in \omom$ and $n \in \omega$
witnesses ``large disjoint antichains". The Laver and Miller
ideals $\ell^0$ and $m^0$ are the corresponding tree ideals. For basic facts about such tree ideals,
like non-inclusion between different ideals, see e.g.~\cite{Br95}.

\begin{defi}[Cofinality of an ideal]
Given an ideal $\I$, its {\em cofinality} $\cof (\I)$ is the smallest cardinality of a family $\J \sub \I$ such that
every member of $\I$ is contained in a member of $\J$.
\end{defi}

A family like $\J$ in this definition is said to be a {\em basis} of $\I$ (or: {\em cofinal} in $\I$).

While the topic of our work are cofinalities of tree ideals, we note that other cardinal invariants of tree ideals $t^0$,
such as the {\em additivity} $\add (t^0)$ (the least size of a subfamily $\J \sub t^0$ whose union is not in $t^0$) and the
{\em covering number} $\cov (t^0)$ (the least size of a subfamily $\J \sub t^0$ whose union is $\omom$) have been
studied as well.
If there is a fusion argument for $\TT$, $t^0$ is a $\sigma$-ideal, and one has $\omega_1 \leq \add (t^0) 
\leq \cov (t^0) \leq \cc$, while the exact value of these two cardinals depends on the model of set theory.
Furthermore, by ``large disjoint antichains", the {\em uniformity} $\non (t^0)$ of a tree ideal $t^0$
(the smallest cardinality of a subset of $\omom$ not belonging to $t^0$) is always equal to $\cc$.
Since $\cof (\I) \geq \non (\I)$ for any non-trivial ideal $\I$, $\cof (t^0) \geq \cc$ follows, and the main
problem about cofinalities of tree ideals is whether they can be equal to $\cc$ or must be strictly above $\cc$.

The question whether $\cof (\ell^0)$ and $\cof (m^0)$ are larger than $\cc$ was discussed in private
communication with M. De\v co and M. Repick\'y, and Repick\'y~\cite{Reta} in the meantime used our
result to obtain a characterization of $\cof (\ell^0)$ as $\dd ((\ell^0)^\cc)$.

\bigskip

\noindent {\bf Acknowledgments.} This research was started in February 2015 while the first author was visiting the other two
authors at the KGRC of the University of Vienna and at the Vienna University of Technology. He thanks 
the JSPS for its support and the universities in Vienna for their hospitality. Our work was continued in November 2015
at the University of East Anglia (first and second authors) and at the Isaac Newton Institute for Mathematical Sciences (INI)
(first and third authors). We thank the INI for its support and both institutions for their hospitality.



\section{The disjoint maximal antichain property}

\begin{defi}
Let $\TT$ be a combinatorial tree forcing. $\TT$ has the {\em disjoint maximal antichain property} if there is a maximal antichain
$(T_\alpha : \alpha < \cc )$ in $\TT$ such that $[T_\alpha] \cap [T_\beta] = \emptyset$ for all $\alpha \neq \beta$.
\end{defi}

The following has been known for some time (see also~\cite[Theorem 1.2]{Reta}).

\begin{prop}  \label{dmap}
Assume $\TT$ has the disjoint maximal antichain property. Then $cf (\cof (t^0)) > \cc$.
\end{prop}

\begin{proof}
Let $(T_\alpha : \alpha < \cc )$ be a disjoint maximal antichain in $\TT$.
Also let $\kappa = cf (\cof (t^0))$ and assume $\kappa \leq \cc$. We shall derive a contradiction.
Assume $\X_\alpha \sub t^0$, $\alpha < \kappa$, are of size  $< \cof (t^0)$. We shall show that
$\X = \bigcup \{ \X_\alpha : \alpha < \kappa \}$ is not cofinal in $t^0$. By homogeneity of the
tree forcing $\TT$, we know that $\X_\alpha$ is not cofinal below $T_\alpha$, that is,
there is $X_\alpha \in t^0$, $X_\alpha \sub [T_\alpha ]$, such that $X_\alpha \not\sub Y$
for all $Y \in \X_\alpha$. Let $X = \bigcup \{ X_\alpha : \alpha < \kappa \}$. By disjointness
of the maximal antichain, we see that $X \in t^0$. Obviously $X \not\sub Y$ for all
$Y \in \X$, and we are done.
\end{proof}

Note that for only showing $\cof (t^0) > \cc$, the homogeneity of the forcing is not needed
(that is, properties 1, 2, and 5 of Definition 1 are enough).

\begin{defi}
Let $\TT$ be a combinatorial tree forcing. $\TT$ has the {\em incompatibility shrinking property} if 
for any $T \in \TT$ and any family $(S_\alpha: \alpha < \mu)$, $\mu < \cc$, in $\TT$ such that $S_\alpha$ is incompatible with $T$ for all $\alpha$, 
one can find $T' \leq T$ such that $[T']$ is disjoint from all the $[S_\alpha]$.
\end{defi}

\begin{prop}   \label{isp}
Let $\TT$ be a combinatorial tree forcing. The incompatibility shrinking property for $\TT$ implies the
disjoint maximal antichain property for $\TT$. In fact, it implies that any maximal antichain can be refined to
a disjoint maximal antichain.
\end{prop}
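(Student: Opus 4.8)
The plan is to prove the stronger ``in fact'' statement directly, since the disjoint maximal antichain property is the special case obtained by refining a maximal antichain of size $\cc$. So fix a maximal antichain $(A_\alpha : \alpha < \lambda)$ in $\TT$. I first note $\lambda \le \cc$: since $\TT \sub \P(\omlom)$ and $\omlom$ is countable, $|\TT| \le \cc$, so every antichain has size at most $\cc$. The goal is to produce a set $\mathcal{B}$ of conditions, each lying below some $A_\alpha$, with pairwise disjoint branch sets, which is still a maximal antichain. Pairwise disjointness of the branch sets already forces $\mathcal{B}$ to be an antichain, since a common lower bound of two members would be a nonempty tree whose branches lie in the empty intersection.

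The construction is a transfinite recursion building an increasing chain of disjoint refining antichains $\mathcal{B}_\eta$, each of size $< \cc$. I would enumerate two kinds of tasks: a \emph{maximality task} for each $T \in \TT$ (there are $\le \cc$ many, as $|\TT| \le \cc$) and a \emph{representation task} for each $A_\alpha$. Since $\lambda \le \cc$, all tasks can be listed in order type $\le \cc$, and because $\cc$ is a cardinal and I add at most one condition per stage, $|\mathcal{B}_\eta| \le |\eta| < \cc$ throughout; this is exactly what makes the incompatibility shrinking property (henceforth ISP) applicable at every stage.

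The heart of the argument is that the condition chosen at each stage is incompatible with everything already in $\mathcal{B}_\eta$, so ISP can shrink it to have branch set disjoint from all $< \cc$ previous members. For a representation task for $A_\alpha$: if no current member lies below $A_\alpha$, then, since each member lies below a unique $A_\beta$ (two distinct $A_\beta$ being incompatible), every member lies below some $A_\beta$ with $\beta \ne \alpha$ and is therefore incompatible with $A_\alpha$; applying ISP with the condition $A_\alpha$ yields $B \le A_\alpha$ with $[B]$ disjoint from all previous branch sets, which I add. For a maximality task for $T$: if $T$ is already compatible with some current member there is nothing to do; otherwise, by maximality of $(A_\alpha)$ I pick $A_\alpha$ compatible with $T$ and a common lower bound $R$ with $R \le T$ and $R \le A_\alpha$. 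As $R \le T$ and $T$ is incompatible with every current member, so is $R$, and ISP gives $B \le R$ with $[B]$ disjoint from all previous branch sets; then $B \le R \le T$ makes $T$ compatible with a member of $\mathcal{B}$ (and it stays so), while $B \le A_\alpha$ keeps $\mathcal{B}$ a refinement.

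Taking unions at limits and letting $\mathcal{B} = \bigcup_\eta \mathcal{B}_\eta$, the maximality tasks guarantee every $T \in \TT$ is compatible with some member, so $\mathcal{B}$ is a maximal antichain; it is disjoint and refines $(A_\alpha)$ by construction. To recover the disjoint maximal antichain property with an antichain of size exactly $\cc$ (as its definition and the use in Proposition~\ref{dmap} demand), I would apply this to a maximal antichain of size $\cc$ — one exists by splitting $\omlom$ into $\cc$ many branch-disjoint trees via property 5 and extending the resulting antichain to a maximal one — whereupon the representation tasks force $|\mathcal{B}| \ge \cc$, hence $=\cc$. The main obstacle to watch is exactly the tension between \emph{disjointness}, which pushes conditions to shrink, and \emph{maximality}, which naive shrinking can destroy; the resolution is to drive maximality through the tasks above while keeping $|\mathcal{B}_\eta| < \cc$ so that ISP's hypothesis $\mu < \cc$ is met, and to exploit that incompatibility within the antichain propagates downward, making ISP's incompatibility hypothesis automatic at each stage.
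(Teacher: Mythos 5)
Your proof is correct and is essentially the paper's argument: a transfinite recursion of length $\cc$ in which the condition treated at each stage is either already compatible with some previously chosen member (and is skipped) or is incompatible with all $<\cc$ of them (using that incompatibility passes downward), so the incompatibility shrinking property yields a further condition with branch set disjoint from everything chosen so far. The paper merely compresses your two kinds of tasks into one by enumerating a dense set of conditions lying below the given maximal antichain of size $\cc$, which handles maximality, refinement, and the size-$\cc$ requirement simultaneously.
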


\begin{proof}
Let $(T_\alpha : \alpha < \cc)$ be a dense set of conditions in $\TT$ all of which lie below a given maximal antichain of size $\cc$.
We construct $A \sub \cc$ of size $\cc$ and $\{ S_\alpha : \alpha \in A \} \sub \TT$ such that
\begin{itemize}
\item $S_\alpha \leq T_\alpha$ for $\alpha \in A$,
\item if $\alpha \notin A$, then $T_\alpha$ is compatible with some $S_\beta$ for $\beta < \alpha$
   with $\beta \in A$,
\item $[S_\alpha ] \cap [S_\beta] = \emptyset$ for $\alpha \neq \beta$ from $A$.
\end{itemize}
Clearly, these conditions imply that $(S_\alpha : \alpha \in A)$ is a disjoint maximal antichain. Also $A$ must necessarily have size $\cc$.

Suppose we are at stage $\alpha < \cc$ of the construction. If $T_\alpha$ is compatible with some $S_\beta$
where $\beta < \alpha$, $\beta \in A$, let $\alpha \notin A$, and we are done. If this is not the
case, let $\alpha \in A$. By the incompatibility shrinking property we find $T' = S_\alpha$ as required.
\end{proof}

\begin{ex}
Sacks forcing $\SS$,  Mathias forcing $\RR$, and Silver forcing $\VV$ have the incompatibility shrinking property
and thus also the disjoint maximal antichain property.
\end{ex}

To see this, simply use that for any two incompatible $S,T \in \SS$, $[S] \cap [T]$ is at most countable, while in the
case of $\VV$, this intersection is finite and for $\RR$, even empty.

From this we obtain that $cf (\cof (s^0)) > \cc$~\cite[Theorem 1.3]{JMS92} where
$s^0$ is the Marczewski ideal, that $cf (\cof (r^0)) > \cc$~\cite{Ma} where $r^0$ is
the ideal of nowhere Ramsey sets, and that $cf (\cof (v^0)) > \cc$
where $v^0$ is the Silver ideal.

We also note that if there is a fusion argument for $\TT$, then the continuum hypothesis CH implies the
incompatibility shrinking property and thus also the disjoint maximal antichain property. For Laver and Miller
forcings, a weaker hypothesis is sufficient.

\begin{prop}   \label{Laver-dmap}
Assume $\bb = \cc$. Then Laver forcing $\LL$ has the incompatibility shrinking property
and thus also the disjoint maximal antichain property.
\end{prop}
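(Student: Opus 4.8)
The conclusion about the disjoint maximal antichain property is immediate from Proposition~\ref{isp}, so the plan is to verify the incompatibility shrinking property directly. Fix $T \in \LL$ and a family $(S_\alpha : \alpha < \mu)$ with $\mu < \cc$, each $S_\alpha$ incompatible with $T$; I want a single $T' \le T$ with $[T'] \cap [S_\alpha] = \emptyset$ for every $\alpha$. The heart of the argument combines two ingredients: a rank function attached to each pair $(T, S_\alpha)$, extracted from incompatibility, and a single bounding function obtained from $\bb = \cc$ that lets me respect all $\mu$ rank functions simultaneously while keeping infinite splitting.

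First I would make precise what incompatibility gives. For each $\alpha$ let $U_\alpha = T \cap S_\alpha$, viewed as a subtree of $T$. The observation is that $S_\alpha$ is incompatible with $T$ exactly when $U_\alpha$ contains no Laver subtree, and this is equivalent to the vanishing of the iterated Laver derivative of $U_\alpha$ (repeatedly discard every node having only finitely many successors inside the current tree). Since this derivative reaches $\emptyset$, every node $w \in U_\alpha$ acquires an ordinal rank $\rho_\alpha(w)$, namely the stage at which it is discarded. The one property I need is the following finiteness fact: if $\rho_\alpha(w) = \xi$, then only finitely many $n \in \succ_T(w)$ satisfy $w \ha n \in S_\alpha$ and $\rho_\alpha(w \ha n) \ge \xi$; equivalently, all but finitely many successors of $w$ inside $T$ either leave $S_\alpha$ or strictly lower the rank. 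This is exactly the statement that $w$ is discarded at stage $\xi$.

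Next I would convert these finite ``bad sets'' into functions and dominate them. Fix an enumeration $(w_k : k \in \omega)$ of the nodes of $T$ in which every node is listed after its initial segments, and for each $\alpha$ define $f_\alpha \in \omom$ by letting $f_\alpha(k)$ be one more than the largest $n$ with $w_k \ha n \in S_\alpha$ and $\rho_\alpha(w_k \ha n) \ge \rho_\alpha(w_k)$ (and $0$ if $w_k \notin S_\alpha$ or there is no such $n$). By the finiteness fact each $f_\alpha$ is well defined, and since $\mu < \cc = \bb$ the family $(f_\alpha : \alpha < \mu)$ is $\le^\ast$-bounded by a single $g \in \omom$. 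I then build $T' \le T$ with $\stem(T') = \stem(T)$ by thinning at each node: for $w_k \in T'$ put $\succ_{T'}(w_k) = \{ n \in \succ_T(w_k) : n \ge g(k) \}$. As $\succ_T(w_k)$ is infinite, so is $\succ_{T'}(w_k)$, hence $T'$ is again a Laver tree below $T$.

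Finally I would check that $[T'] \cap [S_\alpha] = \emptyset$ for each $\alpha$. Suppose some branch $x \in [T']$ lay in $[S_\alpha]$, so that $x \re m \in U_\alpha$ for all $m$. Pick $k_\alpha$ with $f_\alpha(k) \le g(k)$ for $k \ge k_\alpha$. Along $x$ the index of $x \re m$ tends to infinity, so for all large $m$, writing $w = x \re m = w_k$ with $k \ge k_\alpha$, the next entry $n = x(m)$ satisfies $n \ge g(k) \ge f_\alpha(k)$, whence $n$ lies outside the bad set of $w$; thus either $x \re (m{+}1) \notin S_\alpha$, contradicting $x \in [S_\alpha]$, or $\rho_\alpha(x \re (m{+}1)) < \rho_\alpha(x \re m)$. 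Since escape is excluded by assumption, $\rho_\alpha$ would strictly decrease along $x$ from some point on, which is impossible for ordinals. Hence no branch of $T'$ stays in any $S_\alpha$, and $T'$ witnesses the incompatibility shrinking property. I expect the main obstacle to be the first ingredient: isolating the correct rank/finiteness statement from incompatibility and matching the transfinite ``progress'' it provides with a single natural-number dominating function. The point that makes this work is that eventual ($\le^\ast$) domination suffices, because the finitely many early levels where $g$ fails to dominate $f_\alpha$ are harmless once well-foundedness of the ordinal ranks is invoked.
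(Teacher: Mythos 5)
Your proof is correct, and its skeleton---convert each incompatibility into a function on the nodes of $T$, use $\bb = \cc$ to find a single function eventually dominating all of them, and thin $T$ so that beyond the stem every entry clears that bound---is exactly the paper's. The difference lies in the combinatorial core: the paper simply cites \cite[Lemma 2.3]{GRSS95}, which states that if $T \cap S_\alpha$ contains no Laver tree then there is $g_\alpha : \omlom \to \omega$ such that every $x \in [T] \cap [S_\alpha]$ satisfies $x(n) < g_\alpha (x \re n)$ for infinitely many $n$; you instead prove this ingredient from scratch via the iterated Laver derivative. Your rank $\rho_\alpha$ is well defined precisely because incompatibility makes the derivative of $T \cap S_\alpha$ vanish, your finiteness fact is exactly the statement that a node of rank $\xi$ fails to survive stage $\xi+1$, and a branch of $T'$ remaining in $S_\alpha$ would have eventually strictly decreasing rank, which well-foundedness forbids. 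Note that the contrapositive of what you establish---any $x \in [T] \cap [S_\alpha]$ must infinitely often satisfy $x(m) < f_\alpha(k)$ where $w_k = x \re m$---is precisely the cited GRSS statement, so your argument amounts to a self-contained proof of that lemma followed by the paper's domination-and-thinning step. What the paper's route buys is brevity; what yours buys is a fully self-contained argument that exposes where the dominating functions come from. Two small points to tighten: the thinning $\succ_{T'}(w_k) = \{ n \in \succ_T(w_k) : n \ge g(k) \}$ should be applied only at nodes extending $\stem(T)$ (below the stem there is a unique successor, which must be kept), as your stipulation $\stem(T') = \stem(T)$ already suggests; and in the definition of $f_\alpha(k)$ the condition should require $w_k \ha n \in T \cap S_\alpha$, since $\rho_\alpha$ is defined only on that intersection---harmless, because in the verification $x \re (m+1)$ lies in $T' \sub T$ anyway.
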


\begin{proof}
Fix $T \in \LL$, $\mu < \cc$, and any family $(S_\alpha: \alpha < \mu)$ in $\LL$ such that $S_\alpha$ is incompatible with $T$ for all $\alpha$.
Since $T \cap S_\alpha$ does not contain a Laver tree, by \cite[Lemma 2.3]{GRSS95}, there is a function $g_\alpha : \omlom \to \omega$ such that
if $x \in [T] \cap [S_\alpha]$, then there are infinitely many $n$ with $x(n) < g_\alpha (x \re n)$.
By $\bb = \cc$, there is $f : \omlom \to \omega$ eventually dominating all $g_\alpha$. 
Let $T' = \{ s\in T : s(n) > f (s \re n)$ for all $n \in \dom (s)$ beyond the stem of $T \}$. 
Clearly $T'$ is still a Laver tree with the same stem as $T$. Furthermore, $[T' ] \cap
[S_\alpha] = \emptyset$ for if $x$ belonged to the intersection, we would have 
$x(n) < g_\alpha (x\re n)$ for infinitely many $n$ and $x (n) > f( x\re n)$ for all $n$ beyond the
stem of $T'$, a contradiction. 
\end{proof}

A similar argument which we leave to the reader shows:

\begin{prop}   \label{Miller-dmap}
Assume $\dd = \cc$. Then Miller forcing $\MM$ has the incompatibility shrinking property
and thus also the disjoint maximal antichain property.
\end{prop}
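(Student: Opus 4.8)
The plan is to mimic the Laver proof (Proposition~\ref{Laver-dmap}) with $\bb$ replaced by $\dd$, adjusting the combinatorial characterization of incompatibility to suit Miller conditions. Fix $T \in \MM$, a cardinal $\mu < \cc$, and a family $(S_\alpha : \alpha < \mu)$ in $\MM$ with each $S_\alpha$ incompatible with $T$. The goal is to produce $T' \leq T$ with $[T'] \cap [S_\alpha] = \emptyset$ for all $\alpha$.

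The key structural fact I would invoke is an analogue of \cite[Lemma 2.3]{GRSS95} for Miller forcing: if $T \cap S_\alpha$ contains no Miller tree, then one can capture the branches of $[T] \cap [S_\alpha]$ by a single real. For Miller forcing the natural encoding is not via a bounding function but via a growth/domination function on the splitting nodes. Concretely, I expect to associate to each $\alpha$ a function $g_\alpha \in \omom$ (or $g_\alpha : \omlom \to \omega$) such that every $x \in [T] \cap [S_\alpha]$ has its sequence of splitting levels, or the values taken at the infinite-splitting nodes of $T$ along $x$, bounded by $g_\alpha$ in the appropriate sense. Since Miller generics are unbounded rather than dominating, the relevant real records how fast $x$ must grow along the infinitely-splitting nodes forced by $T$; a branch lying in $S_\alpha$ is then pinned down by the failure of this growth.

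Next I would use $\dd = \cc$ to find a single $f \in \omom$ (equivalently $f : \omlom \to \omega$) that eventually dominates all $\mu < \cc$ many functions $g_\alpha$. I then thin $T$ to $T'$ by keeping, at each infinite-splitting node $s$ of $T$ beyond the stem, only those immediate successors whose value exceeds $f(s)$ (and pruning away finite-splitting detours so that $T'$ remains a Miller tree with the same stem). Because $T$ has infinitely many successors at each such $s$, discarding the finitely many below $f(s)$ leaves infinitely many, so $T'$ is still a legitimate Miller condition. The disjointness $[T'] \cap [S_\alpha] = \emptyset$ then follows exactly as in the Laver case: a branch $x$ in the intersection would simultaneously satisfy $x(n) > f(x \re n)$ at all its splitting nodes beyond the stem and the $g_\alpha$-boundedness coming from membership in $[S_\alpha]$, contradicting $f \geq^* g_\alpha$. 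Finally, Propositions~\ref{isp} and~\ref{dmap} hand over the disjoint maximal antichain property and hence the cofinality conclusion.

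The main obstacle, and the reason the author ``leaves it to the reader,'' is pinning down the correct Miller analogue of \cite[Lemma 2.3]{GRSS95} and the matching notion of what $g_\alpha$ dominates. In Laver forcing every node splits infinitely, so the domination is node-by-node and clean; in Miller forcing only some nodes split infinitely, so one must phrase both the encoding $g_\alpha$ and the thinning of $T$ in terms of the (infinite-)splitting nodes along each branch, and verify that this thinning genuinely preserves the Miller property rather than accidentally killing infinite splitting. Getting this bookkeeping right—so that $\dd$ (unbounded-domination) rather than $\bb$ is exactly what is needed—is the delicate point; once it is set up, the domination-and-disjointness argument is formally identical to the Laver proof.
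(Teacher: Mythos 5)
Your proposal has a genuine error at its central step, and it is exactly the step where the hypothesis $\dd = \cc$ is supposed to do its work. You write that you would ``use $\dd = \cc$ to find a single $f$ that \emph{eventually dominates} all $\mu < \cc$ many functions $g_\alpha$.'' That is not what $\dd$ gives. The existence of a single function eventually dominating every member of a family of size $\mu$ is precisely the assertion $\mu < \bb$; it can fail badly for $\mu < \dd$. (For instance, in the Miller model one has $\bb = \omega_1 < \dd = \cc = \omega_2$, and an unbounded family of size $\omega_1$ admits no eventual dominator, yet $\mu = \omega_1 < \cc$ is a case the proposition must handle.) What $\mu < \dd = \cc$ does give is only that the family $(g_\alpha)$ is not \emph{dominating}: there is $f$ such that for every $\alpha$ one has $f(n) > g_\alpha(n)$ for \emph{infinitely many} $n$. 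So as written, your argument proves the proposition under $\bb = \cc$ --- i.e.\ it is the Laver proof of Proposition~\ref{Laver-dmap} transplanted verbatim --- and misses the entire point of this proposition, namely that the strictly weaker hypothesis $\dd = \cc$ suffices for $\MM$.

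The repair is not the routine bookkeeping you suggest, because with only an infinitely-often escaping $f$ your thinning argument collapses: branches of the thinned tree $T'$ exceed $f$ only at their (infinitely many) splitting levels, while $f$ exceeds $g_\alpha$ only on some infinite set of levels depending on $\alpha$, and nothing synchronizes the two; an adversarially chosen branch can have all its splitting levels in the set where $f \leq g_\alpha$. Two things have to change. First, the incompatibility lemma for Miller is \emph{dual} to the Laver one, not a copy of it: since $[T] \cap [S_\alpha]$ is a closed set containing no superperfect set, Kechris's dichotomy puts it inside a $\sigma$-compact set, i.e.\ there is $g_\alpha$ with $x(n) \leq g_\alpha(x \re n)$ for \emph{all but finitely many} $n$, for every $x \in [T]\cap[S_\alpha]$ (Laver gives the ``infinitely often'' bound instead). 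Second, to kill such an eventually-bounded set it suffices that every branch of $T'$ escapes $g_\alpha$ infinitely often, and this is where $\dd$ (rather than $\bb$) can be made to work --- but only by a construction in which the splitting levels of $T'$ and the thresholds imposed on the successor values are chosen together (e.g.\ along an interval-type sequence $(c_i)$, obtained from non-domination of the family of the $g_\alpha$'s closed under composition and adapted to the level structure of $T$, so that successor values at the $i$-th splitting node are forced above $c_{i+1} > g_\alpha(c_i)$ for infinitely many $i$). This synchronization of levels with values is the actual mathematical content of the ``similar argument'' the paper leaves to the reader; the quantifier pattern (eventual smallness killed by infinitely-often escape, versus Laver's infinitely-often smallness killed by eventual domination) is what makes the two proofs genuinely different, not formally identical.
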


\begin{ques}
Do $\LL$ or $\MM$ have the disjoint maximal antichain property in ZFC?
\end{ques}



\section{The selective disjoint antichain property}

We now consider a property weaker than the disjoint maximal antichain property which is sufficient
to show that the cofinalities of the Laver ideal $\ell^0$ and the Miller ideal $m^0$ are larger than
$\cc$ in ZFC.

\begin{defi}
Let $\TT$ be a combinatorial tree forcing. $\TT$ has the {\em selective disjoint antichain property} if there is an antichain
$(T_\alpha : \alpha < \cc )$ in $\TT$ such that 
\begin{itemize}
\item $[T_\alpha] \cap [T_\beta] = \emptyset$ for all $\alpha \neq \beta$,
\item for all $T \in \TT$ there is $S \leq T$ such that
\begin{itemize}
\item either $S \leq T_\alpha$ for some $\alpha < \cc$,
\item or $| [S] \cap [T_\alpha ] | \leq 1$ for all $\alpha < \cc$.
\end{itemize}
\end{itemize}
\end{defi}

We note that for our applications, it would be enough to have $| [S] \cap [T_\alpha ] | \leq \aleph_0$ in the last clause.

\begin{thm} \label{sdap}
Assume $\TT$ has the selective disjoint antichain property. Then $cf (\cof (t^0)) > \cc$.
\end{thm}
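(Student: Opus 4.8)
The plan is to mimic the proof of Proposition~\ref{dmap}, but cope with the fact that the selective disjoint antichain $(T_\alpha : \alpha < \cc)$ need not be a \emph{maximal} antichain. In Proposition~\ref{dmap} the key facts used were: (a) below each $T_\alpha$ one can find a counterexample set $X_\alpha$ not covered by the corresponding family $\X_\alpha$, and (b) the union $X = \bigcup_\alpha X_\alpha$ stays in $t^0$ because the antichain is disjoint and maximal, so every $T \in \TT$ meets some $[T_\alpha]$ in a way that lets us thin out. Here we only have an antichain, so some conditions $T$ may avoid all the $[T_\alpha]$ almost entirely (the second alternative in the definition, $|[S] \cap [T_\alpha]| \leq 1$). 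The selectivity clause is exactly what is designed to handle those conditions.

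So I would argue by contradiction as before. Let $\kappa = cf(\cof(t^0))$, assume $\kappa \leq \cc$, and write $t^0 = \bigcup_{\alpha < \kappa} \X_\alpha$ as an increasing union of families $\X_\alpha$ each of size $< \cof(t^0)$. I want to build a single $X \in t^0$ not contained in any member of $\bigcup_\alpha \X_\alpha$. Using homogeneity (as in Proposition~\ref{dmap}), for each $\beta < \cc$ the family $\X_{\alpha(\beta)}$ (for an appropriate assignment $\beta \mapsto \alpha(\beta) < \kappa$, reindexing the $\cc$-many antichain elements among the $\kappa$-many families) is not cofinal below $T_\beta$, so I can pick $X_\beta \in t^0$ with $X_\beta \subseteq [T_\beta]$ and $X_\beta \not\subseteq Y$ for every $Y \in \X_{\alpha(\beta)}$. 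Since $\kappa \leq \cc$, I can arrange that every $\X_\alpha$ is handled by some $\beta$. Set $X = \bigcup_{\beta < \cc} X_\beta$. By disjointness of the $[T_\beta]$, the sets $X_\beta$ are pairwise disjoint and $X \cap [T_\beta] = X_\beta$. Since each $X_\beta \in t^0$ witnesses $X_\beta \not\subseteq Y$, and the $T_\beta$ are disjoint, $X \not\subseteq Y$ for all $Y \in \bigcup_\alpha \X_\alpha = t^0$, which will be the desired contradiction \emph{once} I verify $X \in t^0$.

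The crux, therefore, is showing $X \in t^0$, i.e.\ that for every $T \in \TT$ there is $S \leq T$ with $[S] \cap X = \emptyset$. This is where the selective property does the work, splitting into the two cases of the definition. Given $T$, pick the $S \leq T$ provided by selectivity. In the first case $S \leq T_\alpha$ for some $\alpha$: then $[S] \subseteq [T_\alpha]$, so $[S] \cap X = [S] \cap X_\alpha$, and since $X_\alpha \in t^0$ and $X_\alpha \subseteq [T_\alpha]$, I can shrink $S$ further to some $S' \leq S$ with $[S'] \cap X_\alpha = \emptyset$, giving $[S'] \cap X = \emptyset$. In the second case $|[S] \cap [T_\alpha]| \leq 1$ for every $\alpha$: then $[S] \cap X = \bigcup_\alpha ([S] \cap X_\alpha) \subseteq \bigcup_\alpha ([S] \cap [T_\alpha])$ has at most one point from each $[T_\alpha]$, so $[S] \cap X$ meets each $[T_\alpha]$ in at most a point. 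A single branch of $[S]$ can be removed by passing to $S_s \leq S$ for a suitable immediate extension $s$ of the stem avoiding that branch; more carefully, I must discard the (at most $\cc$-many, but locally controllable) countably-many bad branches lying in $[S]$, using property~5 (large disjoint antichains) to split $S$ into $\cc$ disjoint subtrees and picking one whose branch set avoids the thin set $[S] \cap X$.

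The main obstacle is precisely this last step: in the second case I must ensure that after removing the at-most-one-point-per-$\alpha$ intersections I am left with a genuine condition $S' \leq S$ in $\TT$ with $[S'] \cap X = \emptyset$. The set $[S]\cap X$ could \emph{a priori} be large (it has one point per $\alpha$, so up to $\cc$ points), so a naive ``delete a branch'' move is insufficient; I expect the intended argument is that $[S] \cap X$, being selected so thinly, is contained in a set that is null for the ideal below $S$, so that the defining property of $t^0$ applied to $[S] \cap X$ (which should itself lie in $t^0$) yields the required $S' \leq S$. Verifying that $[S] \cap X \in t^0$ in this case — equivalently, that a set meeting each antichain piece in at most one point is $t^0$-small — is the real content, and I would focus the bulk of the write-up there; the remaining bookkeeping (the reindexing $\beta \mapsto \alpha(\beta)$ and the disjointness computations) is routine.
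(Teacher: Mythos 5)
Your setup (the reindexing $\beta\mapsto\alpha(\beta)$, the choice of $X_\beta\sub [T_\beta]$ via homogeneity, and Case 1 of the verification) matches the paper's argument. The gap is exactly where you locate it, but the patch you propose cannot work: the lemma you hope for --- that any set meeting each $[T_\alpha]$ in at most one point is $t^0$-small --- is false. Indeed, if some tree $S^*\in\TT$ realizes the second alternative of selectivity, i.e.\ $|[S^*]\cap[T_\alpha]|\leq 1$ for all $\alpha<\cc$, then $Y=[S^*]$ meets each antichain piece in at most one point, yet $Y\notin t^0$, because every $S'\leq S^*$ satisfies $\emptyset\neq[S']\sub[S^*]=Y$. (If no such $S^*$ exists, then every $T$ has an extension below some $T_\alpha$, the antichain is maximal, and you are simply back in Proposition~\ref{dmap}; so the problematic case is precisely the one in which your lemma fails.) Concretely, for $\LL$ and $\MM$ with $f(x)(n)=x(n)\mod 2$, the $[T_\alpha]$ are the fibers of $f$, and there exist Laver/Miller trees on which $f$ is one-to-one; their branch sets meet each fiber at most once but are certainly not in $\ell^0$ or $m^0$. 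So ``thinness relative to the antichain'' by itself gives nothing, and your construction imposes no further constraint on the $X_\beta$ that could be used in Case 2; nothing rules out that $[S]\cap X$ is a size-$\cc$ set that no condition below $S$ avoids.

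The missing idea --- the point where the paper's proof genuinely goes beyond Proposition~\ref{dmap} --- is a diagonalization against all potential Case-2 trees, performed \emph{before} choosing the $X_\alpha$. Enumerate as $(S_\beta:\beta<\cc)$ all trees $S\in\TT$ with $|[S]\cap[T_\alpha]|\leq 1$ for all $\alpha$. At stage $\alpha$, first shrink $T_\alpha$ to $T'_\alpha\leq T_\alpha$ with $[T'_\alpha]\cap\bigcup_{\beta<\alpha}[S_\beta]=\emptyset$; this is possible by property 5 (large disjoint antichains), since $[T_\alpha]\cap\bigcup_{\beta<\alpha}[S_\beta]$ has at most $|\alpha|<\cc$ points (one per $S_\beta$), so among $\cc$ many pairwise disjoint subtrees of $T_\alpha$ some avoids it. Only then choose $X_\alpha\sub[T'_\alpha]$ by homogeneity. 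Now in Case 2 the given $S$ equals $S_\beta$ for some $\beta$, and by construction $X_\alpha\cap[S_\beta]=\emptyset$ for all $\alpha>\beta$; hence $[S_\beta]\cap X\sub\bigcup_{\alpha\leq\beta}[S_\beta]\cap[T_\alpha]$ has fewer than $\cc$ points, and one more application of property 5 yields $S'\leq S_\beta$ with $[S']\cap X=\emptyset$. With this anticipation of the Case-2 trees built into the construction, the rest of your write-up goes through essentially verbatim; without it, Case 2 cannot be closed.
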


\begin{proof}
Let $(T_\alpha : \alpha < \cc )$ be a selective disjoint antichain in $\TT$. Also assume that $(S_\beta : \beta < \cc)$
is a list of all trees $S$ in $\TT$ such that $| [S] \cap [T_\alpha] | \leq 1$ for all $\alpha < \cc$. 
Put $\kappa = cf (\cof (t^0))$ and assume $\kappa \leq \cc$.
Also assume $\X_\alpha \sub t^0$, $\alpha < \kappa$, are of size  $< \cof (t^0)$. As in the proof of
Proposition~\ref{dmap}, we shall show that
$\X = \bigcup \{ \X_\alpha : \alpha < \kappa \}$ is not cofinal in $t^0$. 

By ``large disjoint antichains", we find $T_\alpha ' \leq T_\alpha$ such that $[T_\alpha ' ] \cap
\bigcup_{\beta < \alpha } [ S_\beta] = \emptyset$. By homogeneity, there is $X_\alpha \in t^0$
with $X_\alpha \sub [T_\alpha ']$ such that $X_\alpha \not\sub Y$ for all $Y \in \X_\alpha$.
Let $X = \bigcup \{ X_\alpha : \alpha < \kappa \}$. Obviously $X \not\sub Y$ for all
$Y \in \X$. We need to show that $X$ belongs to $t^0$. To this end, let $T \in \TT$.

First assume there is $S \leq T$ such that $S \leq T_\alpha$ for some $\alpha < \cc$.
Then $[S] \cap X \sub X_\alpha$. Since $X_\alpha \in t^0$, there is $S' \leq S$
such that $[S'] \cap X_\alpha = \emptyset$, and $[S'] \cap X = \emptyset$ follows.

Next assume there is $S \leq T$ such that $| [S] \cap [T_\alpha]| \leq 1$ for all $\alpha < \cc$.
Then $S = S_\beta$ for some $\beta < \cc$. By construction, we know that $X_\alpha \cap
[S_\beta] = \emptyset$ for $\alpha > \beta$. Hence $[S_\beta] \cap X \sub \bigcup_{\alpha \leq \beta}
[S_\beta] \cap [T_\alpha]$ and therefore $| [S_\beta] \cap X | < \cc$. Using again
``large disjoint antichains", we see that there is $S' \leq S_\beta$ such that $[S'] \cap X
= \emptyset$, as required. This completes the proof of the theorem.
\end{proof}

Again note that for only showing $\cof (t^0) > \cc$, the homogeneity of the forcing is not needed
(that is, properties 1, 2, and 5 of Definition 1 are enough)..

The next property of a combinatorial tree forcing $\TT$ implies that $\TT$ adds a minimal
real and, in fact, standard proofs of minimality go via this property.

\begin{defi}
Let $\TT$ be a combinatorial tree forcing. $\TT$ has the {\em constant or one-to-one property} if for all $T \in \TT$ and all continuous
$f : [T] \to \twoom$, there is $S \leq T$ such that $f\re [S]$ is either constant or one-to-one.
\end{defi}

It is known that both Miller forcing and Laver forcing have the constant or one-to-one property.
For the former, this is implicit in work of Miller~\cite[Section 2]{Mi84}, for the latter, in work of
Gray~\cite{Gra80} (see also~\cite[Theorems 2 and 7]{Gro87} for similar arguments). These results are formulated in
terms of minimality. For completeness' sake, we include a proof of the more difficult case of
Laver forcing in our formulation. Note also that the result for $\MM$ is a trivial consequence of the result for $\LL$.

\begin{thm}[Miller]  \label{Miller-min}
Miller forcing $\MM$ has the constant or one-to-one property.
\end{thm}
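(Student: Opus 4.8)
The plan is to prove that Miller forcing $\MM$ has the constant or one-to-one property by a fusion argument that analyzes, along a tree of conditions, whether the continuous map $f$ can be forced to separate branches. Given $T \in \MM$ and a continuous $f : [T] \to \twoom$, I would first observe that by passing to $T_s$ for a suitable $s$ (using closure under subtrees) I may assume the stem of $T$ is trivial, or more importantly that I control the behavior of $f$ at the splitting structure. Continuity of $f$ means that for each $x \in [T]$ and each $n$, the value $f(x) \re n$ is determined by some finite initial segment of $x$; I would record this by a monotone assignment from nodes of $T$ to finite binary strings. The heart of the argument is a dichotomy carried out at each infinite splitting node.

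The key step is a local dichotomy: at a given splitting node $s$ of a (shrunken) Miller condition, with infinitely many immediate successors $s \ha n$ surviving, I would ask whether the finitely-determined values of $f$ along the subtrees above these successors can be made pairwise distinct (at some fixed level of $\twolom$) by thinning the successor set to an infinite subset, or whether they are forced to agree. First I would try to build, by a fusion that handles the countably many splitting nodes in order, a condition $S \leq T$ on which $f$ is one-to-one: at each splitting node I thin the infinite successor set so that the images under $f$ of branches through distinct successors are separated by a long enough common initial segment, guaranteeing that two branches differing at that node already have different $f$-values. If at some stage this separation is impossible — meaning no matter how I thin, infinitely many successors yield the same $f$-value prefix — I would instead steer the fusion toward producing a condition on which $f$ is constant, by always following successors that agree, forcing $f$ to stabilize to a single value along the resulting branches.

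The technical engine making this work is a rank or bookkeeping argument that guarantees one of the two outcomes globally, not just locally. I would define, for each node, whether $f$ restricted above that node is ``separable'' (can be made one-to-one on a Miller subcondition) and show that if the root node is not separable then one can recursively extract a constant subcondition. The main obstacle I expect is precisely this global coherence: a naive fusion might alternate between separating and collapsing behavior at different splitting nodes, yielding a condition on which $f$ is neither constant nor one-to-one. Overcoming this requires a careful choice — at each splitting node either committing to the one-to-one strategy (thinning to separate) whenever possible, and only if separation fails \emph{everywhere above} a node does one switch globally to building the constant branch. Ensuring that ``separation fails everywhere above'' can be detected and exploited to produce a genuine Miller subtree (with infinitely many splitting nodes each of infinite degree) on which $f$ is literally constant is the delicate part, and it is exactly here that the Miller tree structure — infinitely many infinite-splitting nodes along every branch — is used, rather than the Laver structure. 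Since the statement is Theorem~\ref{Miller-min} and the excerpt notes the Laver case is harder and will be treated separately, I would keep the Miller argument self-contained but remark that it is the ``easy half'' of the constant-or-one-to-one dichotomy for these tree forcings.
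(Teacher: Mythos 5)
There is a genuine gap: your text is a plan for a proof rather than a proof, and the step you yourself isolate as ``the technical engine'' and ``the delicate part'' is exactly the step you never carry out. Concretely, three things are missing. First, the local dichotomy at a splitting node is never actually established: to formulate ``the successors' $f$-values agree or can be separated'' you need to attach to each node $\tau$ of a shrunken condition a value $x_\tau \in \twoom$ with $[S_\tau] \sub f^{-1}([x_\tau \re |\tau|])$ (this comes from continuity plus a pure-decision/fusion argument); you gesture at ``a monotone assignment from nodes to finite binary strings'' but never produce this labelling or its coherence properties. Second, and decisively, the global dichotomy is not proved: you say that if ``separation fails everywhere above'' a node one should extract a constant subcondition, and otherwise fuse towards injectivity, but you give no mechanism that decides which horn holds and makes the chosen horn succeed. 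The paper's argument (given for Laver, Theorem~\ref{Laver-min}) does this with an ordinal rank $\rho$ on nodes: $\rho(\tau)=0$ iff infinitely many successors change the value $x_\tau$, and $\rho(\tau)=\alpha$ iff $\neg(\rho(\tau)<\alpha)$ and infinitely many successors have rank $<\alpha$; if $\rho$ is undefined at some node one builds a subtree on which the value never changes (hence $f$ is constant), and if $\rho$ is defined everywhere one builds fronts $F_n$ descending in rank to rank-$0$ nodes, attaching pairwise disjoint clopen sets $[s_\tau]$ there to force injectivity. Without the rank (or an equivalent well-foundedness device) your fusion can, as you note yourself, alternate forever between the two behaviours; naming the obstacle is not the same as overcoming it. Third, your one-to-one fusion as described --- separate the $f$-images at \emph{every} splitting node --- cannot be run literally: at nodes of positive rank almost all successors carry the same value $x_\tau$ and no thinning of the successor set separates them, so injectivity can only be arranged at the rank-$0$ nodes reached by the fronts.

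It is also worth comparing routes: the paper does not prove the Miller case directly at all. It proves the harder Laver case in full with the rank-and-fronts argument sketched above, and then observes that the Miller case is a trivial consequence. Indeed, by homogeneity (clause 4 of Definition 1), below any Miller condition $T$ there is a copy of the full tree $\omlom$, i.e.\ a homeomorphism $g : \omom \to [T]$ carrying Miller subtrees of $\omlom$ to Miller subtrees of $T$; since $\omlom$ is itself a Laver condition and every Laver tree is a Miller tree, applying Theorem~\ref{Laver-min} to the continuous map $f \circ g$ yields a Laver tree $S'$ on which $f\circ g$ is constant or one-to-one, and its image generates the desired $S \leq T$. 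So if you may quote the Laver theorem, the Miller theorem is one line (and your closing remark has the difficulty backwards: a self-contained Miller proof is not the ``easy half'' --- it is easy only because it reduces to Laver); if you may not, then the content you must supply is precisely the rank argument you deferred.
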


\begin{thm}[Gray]  \label{Laver-min}
Laver forcing $\LL$ has the constant or one-to-one property.
\end{thm}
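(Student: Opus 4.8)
The plan is to prove the dichotomy by a fusion argument driven by a single separation lemma, after first disposing of the constant case. First I would record the basic alternative: either there is a Laver subtree $S \leq T$ with $f \re [S]$ constant, in which case we are immediately done, or $f \re [U]$ is non-constant for \emph{every} Laver subtree $U \leq T$. So assume the latter throughout.

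Next I would reduce the one-to-one conclusion to a purely local condition on the tree. Call $S \leq T$ \emph{separated} if for every node $s \in S$ above $\stem(S)$ and any two distinct immediate successors $s \ha m, s \ha m'$ of $s$ in $S$, the images $f[[S_{s \ha m}]]$ and $f[[S_{s \ha m'}]]$ are disjoint. If $S$ is separated then $f \re [S]$ is one-to-one: any two distinct $x,y \in [S]$ have a meet $s \supseteq \stem(S)$ and pass through distinct immediate successors of $s$, so $f(x)$ and $f(y)$ lie in disjoint sets. Thus it suffices to build a separated Laver tree $S \leq T$.

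The engine is a \emph{separation lemma}: assuming $f$ is non-constant on every Laver subtree, for every Laver tree $T' \leq T$ with stem $t$ there is a Laver tree $T'' \leq T'$ with the same stem $t$, retaining an infinite set of immediate successors $t \ha m$, such that the images $f[[T''_{t \ha m}]]$ are pairwise disjoint. To prove it, attach to each immediate successor the longest common initial segment $w_m \in \twolom$ of $\{ f(x) : x \in [T'_{t \ha m}]\}$; non-constancy makes each $w_m$ finite. If infinitely many of the $w_m$ are pairwise incomparable, I keep exactly those successors and am done, since incomparable segments force disjoint basic open neighbourhoods of the respective images. The remaining case, where along an infinite set the $w_m$ form a chain or repeat a single value, is the crux: here I would push the divergence one coordinate deeper by passing, inside each offending $T'_{t \ha m}$, to a same-stem Laver subtree on which the first undecided coordinate of $f$ is constant — this is the pure-decision (Prikry) behaviour of Laver forcing applied to a single coordinate of a continuous $f$, legitimate because each coordinate $x \mapsto f(x)(j)$ is clopen-determined — thereby strictly lengthening the common segment, and then iterate. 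To guarantee that this iteration terminates with genuinely pairwise-disjoint images across infinitely many successors, I would organise it around a well-founded rank measuring how far down the tree one must read to separate values; this is exactly the combinatorial content of Gray's original argument.

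Finally I would assemble $S$ by a standard Laver fusion: enumerate the splitting nodes of the tree under construction and, at each step, apply the separation lemma to the current node to disjointify the images of its successors while keeping infinitely many of them, stabilising each finite level so that the fusion limit $S$ is again a Laver tree with the expected stem and infinite splitting at every node above it. By construction $S$ is separated, so $f \re [S]$ is one-to-one. The main obstacle is the separation lemma in its hard case: controlling the depth to which values must be read, and in particular proving that the coordinate-by-coordinate refinement is well-founded rather than running forever along a single branch, is precisely what forces the rank analysis in place of a one-step pigeonhole.
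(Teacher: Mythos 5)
Your reduction to a ``separated'' tree and the closing fusion are fine, and so is the easy half of your separation lemma (pairwise incomparable $w_m$ do give disjoint images). The gap is where you locate it, but it is worse than a missing termination argument: the mechanism you propose for the hard case --- keep lengthening the decided common segments $w_m$ by pure decision until infinitely many become pairwise incomparable --- provably cannot succeed, because the clopen-separation target it aims at is false in general. Concretely, let $T=\omlom$ and let $f(x)\in\twoom$ be the characteristic function of the set of shifted partial sums $\bigl\{ \sum_{i\leq k}(x(i)+1) : k\geq 1 \bigr\}$. This $f$ is continuous and non-constant on every Laver subtree (two branches splitting at a node of length $\geq 1$ get different values), so your standing hypothesis holds. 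Now look at the root and its immediate successors $\la m \ra$: for \emph{every} Laver tree $U_m\leq T_{\la m \ra}$ with stem $\la m \ra$, branches whose second coordinate is large have $f$-values beginning with arbitrarily long blocks of $0$'s, so every image $f[[U_m]]$ accumulates at $\bar{0}=000\cdots$. Hence the longest common initial segment of $f[[U_m]]$ is of the form $0^{i}$ no matter how you prune (keeping the stem), all the $w_m$ stay pairwise comparable forever, and no two of the images can ever be confined to disjoint basic clopen sets, since all their closures contain $\bar{0}$. So at such a node your iteration does not merely ``risk running forever'': it can never exit the hard case, and no rank measuring ``how deep one must read to separate values'' can rescue the clopen-separation strategy, because no finite depth suffices.

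Note that in this example the images can still be made pairwise \emph{disjoint} (choose the successor sets $B_m=\succ_{U_m}(\la m \ra)$ so that the sets $\{m+b : b\in B_m\}$ are pairwise disjoint; then the coded sets have distinct minima), so the weak form of your separation lemma is not refuted --- but your proof of it is, since it goes through clopen separation. This is exactly the point where Gray's argument, as presented in the paper, diverges from yours: the paper first stabilizes, by pure decision and fusion, a limit value $x_\tau$ for every node $\tau$ (Claims~\ref{claim1}--\ref{claim3}), defines a rank from these values, and then does \emph{not} attempt to separate images at the immediate successors of every node. Nodes of positive rank (like the root in the example above, where all $x_{\la m \ra}$ equal $x_{\la\, \ra}=\bar{0}$) are passed through with all retained successors sharing the same limit value, and separation into pairwise incomparable finite segments $s_\tau$ is performed only at \emph{fronts} of rank-$0$ nodes, where sibling limit values genuinely differ and converge --- which is precisely what makes incomparable segments available there. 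Injectivity is then certified front-by-front, not successor-by-successor; ill-foundedness of the rank is not a pathology to be excluded but is exactly the constant case. As it stands, your separation lemma is as hard as the theorem itself (the only proof of its weak form I see is to run the paper's front construction below $T'$, obtaining a same-stem injective subtree, which is then automatically separated), so deferring its hard case to ``the combinatorial content of Gray's original argument'' leaves the essential content of the proof unsupplied.
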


\begin{proof}
Fix $f$ and $T$. The pure decision property of Laver forcing implies:

\begin{sclaim}  \label{claim1}
Let $n \in \omega$ and $\tau \in T$ with $\stem (T) \sub \tau$. There are $T' \leq_0 T_\tau$ and $s \in 2^n$ such that
$[T'] \sub f^{-1} ( [s] )$.
\end{sclaim}

\begin{sclaim}  \label{claim2}
Let $\tau \in T$ with $\stem (T) \sub \tau$. There are $T' \leq_0 T_\tau$ and $x = x_\tau \in \twoom$ such that
if $(k^n_\tau : n \in \omega)$ is the increasing enumeration of $\succ_{T'} (\tau)$ then
$[T'_{\tau \ha k^n_\tau}] \sub f^{-1} ( [x \re ( |\tau|]  + n))$.
\end{sclaim}

\begin{proof}
Using Claim~\ref{claim1}, construct a $\leq_0$-decreasing sequence $(S^n : n \in \omega )$ with $S^0 \leq_0 T_\tau$  
and a $\subset$-increasing sequence $(s^n \in 2^{n + |\tau|} : n \in \omega )$ such that $[S^n ] \sub f^{-1} ( [s^n])$
for all $n$. Let $k^n_\tau = \min ( \succ_{S^n} (\tau) \sem (k^{n-1}_\tau + 1) )$ where we put $k^{-1}_\tau = -1$.
Let $T'$ be such that $\succ_{T'} (\tau) = \{ k^n_\tau : n \in \omega \}$ and $T'_{\tau \ha k^n_\tau} = S^n_{\tau \ha k^n_\tau}$.
Also let $x = \bigcup_n s^n \in \twoom$. Then $T' \leq_0 T_\tau$ and $[T'_{\tau \ha k^n_\tau}] = [S^n_{\tau \ha k^n_\tau}]
\sub [S^n] \sub f^{-1} ([s^n]) = f^{-1} ([ x \re ( |\tau|]  + n) )$.
\end{proof}

By Claim~\ref{claim2} and a fusion argument we see

\begin{sclaim}  \label{claim3}
There are $T' \leq_0 T$, $(x_\tau : \tau \in T' , \stem (T) \sub \tau )$, and $(( k^n_\tau : n \in \omega ) : \tau \in T' , \stem (T) \sub \tau )$
such that $(k^n_\tau : n \in\omega)$ is the increasing enumeration of $\succ_{T'} (\tau)$ for all $\tau$ and 
$[T'_{\tau \ha k^n_\tau}] \sub f^{-1} ( [x_\tau \re ( |\tau|]  + n) )$ for all $n$ and all $\tau$. In particular 
$[T'_\tau ] \sub f^{-1} ( [x_\tau \re |\tau| ] )$ for all $\tau$.
\end{sclaim}

The properties of the $x_\tau$ imply in particular that $x_{\tau \ha k^n_\tau}$ converges to $x_\tau$ as $n$
goes to infinity. Now define a rank function for $\tau \in T'$ as follows.
\begin{itemize}
\item $\rho (\tau) = 0 \;\Loleriar\; \exists^\infty k \in \succ_{T'} (\tau)$ such that $x_{\tau\ha k} \neq x _\tau$,
\item for $\alpha > 0$, $\rho (\tau) = \alpha \; \Loleriar \; \neg \rho (\tau) < \alpha \; \land \; \exists^\infty k \in \succ_{T'} (\tau) \;
   (\rho (\tau \ha k ) < \alpha )$.
\end{itemize}
By the convergence property of the $x_\tau$, we see that $\rho (\tau) = 0$ implies in particular that
the set $\{ x_{\tau \ha k^n_\tau} : n \in \omega \}$ is infinite. 

{\sf Case 1.} $\rho (\tau) = \infty$ for some $\tau \in T'$ (i.e., the rank is undefined). \\
Then we can easily construct a Laver tree $S \leq T'$ such that $\stem (S) = \tau$ and $x_\sigma = x_\tau$ for all $\sigma \in S$
with $\sigma \supseteq \tau$. We claim that $f \re [S]$ is constant with value $x_\tau$. Indeed let $y \in [S]$. Fix $k \geq |\tau|$.
By construction $y \in [S_{y \re k}] \sub f^{-1} ( [x_\tau \re k ])$. Since this holds for all $k$, $f(y) = x_\tau$, and we are done.

{\sf Case 2.} $\rho (\tau)$ is defined for all $\tau \in T'$. \\
Recall that $F \sub T'$ is a {\em front} if for all $y \in [T']$ there is a unique $n$ with $y \re n \in F$. We build a subtree
$S$ of $T'$ by specifying fronts $F_n$, $n \in \omega$, such that for every $\sigma \in F_{n + 1}$ there is a
(necessarily unique) $\tau \in F_n$ with $\tau \subset \sigma$. That is, $S$ will be the tree generated by the fronts:
$\sigma \in S$ iff there are $n \in \omega$ and $\tau \in F_n$ with $\sigma \sub \tau$. Additionally, we shall
guarantee that there are $s_\tau \sub x_\tau$ for $\tau \in \bigcup_n F_n$ such that
\begin{itemize}
\item if $\sigma \neq \sigma'$ both are in $F_n$ then $[s_\sigma ] \cap [s_{\sigma'}] = \emptyset$,
\item $[S_\tau] \sub f^{-1} ( [s_\tau])$ for $\tau \in \bigcup_n F_n$,
\item if $\sigma \subset \tau$ with $\sigma \in F_n$ and $\tau \in F_{n+1}$ then $s_\sigma \subset s_\tau$,
\item if $\sigma \subset \tau$ with $\sigma \in F_n$ and $\tau \in F_{n+1}$ then for every $k$ with $|\sigma| \leq k < | \tau |$,
   $x_{\tau \re k} = x_\sigma$ and $\rho (\sigma) > \rho (\tau \re |\sigma| + 1) > ... > \rho (\tau \re |\tau | - 1 ) = 0$.
\end{itemize}
We first verify that this is enough to guarantee that $f \re [S]$ is one-to-one. If $y,y' \in [S]$ are distinct, then
there are $n, i, i' \in \omega$ such that $y \re i$ and $y' \re i'$ are distinct elements of $F_n$. Then
$y \in f^{-1} ( [s_{y \re i} ] )$, $y' \in f^{-1} ( [s_{y' \re i'} ] )$ by the second clause, and $[s_{y \re i} ]$ and $[s_{y' \re i'} ]$
are disjoint by the first clause. Hence $f(y) \neq f(y')$ as required. Thus it suffices to construct the $F_n$ and $s_\tau$.

$n = 0.$ We let $F_0 = \{ \stem (T') \} = \{ \stem (S) \}$. Also let $s_{\stem (S) } = x_{\stem (S)} \re | \stem (S)|$.

Suppose $F_n$ and $s_\sigma$ for $\sigma \in F_n$ have been constructed. We shall construct $F_{n+1}$,
$s_\sigma$ for $\sigma \in F_{n+1}$, as well as the part of the tree $S$ in between $F_n$ and $F_{n+1}$.
Fix $\sigma \in F_n$. By $A_\sigma^n$ we denote the part of $S$ between $\sigma$ and $F_{n+1}$, that is,
$A_\sigma^n = \{ \tau \in S : \sigma \sub \tau$ and $\tau \subset \upsilon$ for some $\upsilon$ in $F_{n+1} \}$.
$A_\sigma^n$ will be constructed recursively so as to satisfy the forth clause above.

Put $\sigma$ into $A_\sigma^n$. Suppose some $\tau \supseteq \sigma$ has been put into $A_\sigma^n$,
$x_\tau = x_\sigma$ and, in case $\tau \supset \sigma$, $\rho (\sigma) > \rho (\tau)$. In case $\rho (\tau) =0$,
no successor of $\tau$ will be in $A_\sigma^n$ and the successors of $\tau$ will belong to $F_{n+1}$, as explained below.
If $\rho (\tau) > 0$, then $x_{\tau \ha k} = x_\tau$ for almost all $k \in \succ_{T'} (\tau)$ and $\rho (\tau \ha k) < \rho (\tau)$
for infinitely many $k \in \succ_{T'} (\tau)$. Hence we can prune the successor level of $\tau$ to $\succ_S (\tau)$ such that
$x_{\tau \ha k} = x_\tau$ and $\rho (\tau \ha k) < \rho (\tau)$ for all $k \in \succ_S (\tau)$. The forth clause is clearly
satisfied. This completes the construction of $A_\sigma^n$.

Now fix $\tau \in A_\sigma^n$ with $\rho (\tau) = 0$. By pruning $\succ_{T'} (\tau)$, if necessary, we may assume
without loss of generality that the $x_{\tau \ha k_\tau^m}$, $m\in\omega$, are all pairwise distinct and converge to $x_\tau = x_\sigma$ 
and that, in fact,
there is a strictly increasing sequence $(i_\tau^m : m \in \omega)$ such that $i_\tau^m = \min \{ i : x_{\tau \ha k_\tau^m} (i) \neq
x_\tau (i) \}$. Unfixing $\tau$, we may additionally assume that if $\tau \neq \tau'$ are both in $A_\sigma^n$ of rank $0$
and $m , m' \in \omega$, then $i_\tau^m \neq i_{\tau'}^{m'}$. Finally we may assume that all such $i_\tau^m$ are larger than
$| s_\sigma |$. This means in particular that $s_\sigma \sub x_{\tau \ha k_\tau^m}$ for all $\tau$ and $m$ because
$s_\sigma \sub x_\sigma = x_\tau$. Now choose $s_{\tau \ha k_\tau^m} \sub x_{\tau \ha k_\tau^m}$ such that 
$| s_{\tau \ha k_\tau^m} | > i_\tau^m$. Then $s_\sigma \subset s_{\tau \ha k_\tau^m}$ and the $s_{\tau \ha k_\tau^m}$
for distinct pairs $(\tau,m)$ with $\tau \in A_\sigma^n$ of rank $0$ and $m \in \omega$ are pairwise incompatible.

Unfix $\sigma \in F_n$. Let $F_{n+1} = \{ \tau \ha k_\tau^m : \tau \in A_\sigma^n$ for some $\sigma \in F_n, 
\rho (\tau) = 0$, and $m \in \omega \}$. The third clause is immediate. To see the first clause, take distinct
$\tau , \tau' \in F_{n+1}$. There are $\sigma , \sigma ' \in F_n$ such that $\sigma \subset \tau$ and $\sigma ' \subset \tau '$.
If $\sigma \neq \sigma '$, then $[s_\tau] \cap [s_{\tau'}] = \emptyset$ because $[s_\sigma] \cap [s_{\sigma'}] = \emptyset$
and $s_\sigma \subset s_\tau$ and $s_{\sigma '} \subset s_{\tau '}$. If $\sigma = \sigma '$, then
$[s_\tau] \cap [s_{\tau'}] = \emptyset$ by the construction in the previous paragraph. Finally, to see the second clause, by pruning $T'_\tau$
for $\tau \in F_{n+1}$ if necessary, we may assume $[T'_\tau] \sub f^{-1} ( [s_\tau])$ (see Claim~\ref{claim3}). This completes the recursive
construction and the proof of the theorem.
\end{proof}

\begin{prop}   \label{constant-11}
Assume $\TT$ is a combinatorial tree forcing with the constant or one-to-one property. Then $\TT$ has the selective disjoint 
antichain property.
\end{prop}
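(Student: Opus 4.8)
The plan is to let the continuous function $f : \omom \to \twoom$ witnessing ``large disjoint antichains'' (property 3) do double duty: it both indexes the desired antichain and is the very map to which the constant or one-to-one property gets applied. First I would fix a bijective enumeration $\twoom = \{ x_\alpha : \alpha < \cc \}$ and, for each $\alpha$, let $T_\alpha \in \TT$ be the tree with $[T_\alpha] = f^{-1}(\{ x_\alpha \})$, which exists by property 3. Since the fibres of $f$ are pairwise disjoint, $[T_\alpha] \cap [T_\beta] = \emptyset$ for $\alpha \neq \beta$, giving the first clause of the selective disjoint antichain property for free. The same disjointness shows $(T_\alpha : \alpha < \cc)$ is genuinely an antichain: any common extension $R \leq T_\alpha, T_\beta$ in $\TT$ would have $[R] \sub [T_\alpha] \cap [T_\beta] = \emptyset$, which is impossible because every condition has nonempty set of branches (by homogeneity $[R]$ is a homeomorphic copy of $\omom$).

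For the second clause, fix $T \in \TT$ and apply the constant or one-to-one property to the continuous map $f \re [T] : [T] \to \twoom$. This produces $S \leq T$ such that $f \re [S]$ is either constant or one-to-one, and these two cases match the two alternatives exactly. If $f \re [S]$ is constant, its value is some $x_\alpha \in \twoom$, so $[S] \sub f^{-1}(\{ x_\alpha \}) = [T_\alpha]$ and hence $S \leq T_\alpha$, which is the first alternative. If $f \re [S]$ is one-to-one, then for every $\alpha$ we have $[S] \cap [T_\alpha] = (f \re [S])^{-1} (\{ x_\alpha \})$, which can contain at most one point by injectivity, so $|[S] \cap [T_\alpha]| \leq 1$ for all $\alpha$, which is the second alternative. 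This establishes the property.

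The single step that needs care, and which I expect to be the main (though minor) obstacle, is the passage from $[S] \sub [T_\alpha]$ to the forcing inequality $S \leq T_\alpha$ in the constant case, since $S \leq T_\alpha$ literally means the node-set inclusion $S \sub T_\alpha$. This is immediate once one knows the conditions are pruned (every node of $S$ lies on a branch, and that branch lies in $[T_\alpha]$, hence the node lies in $T_\alpha$), which holds for Laver, Miller, Sacks and the other forcings of interest; alternatively one replaces $S$ by the tree of initial segments of its branches, which has the same set of branches and is therefore contained in $T_\alpha$. Apart from this bookkeeping, the argument is a direct translation in which ``constant'' yields the first alternative and ``one-to-one'' yields the second, so no combinatorics is required beyond the constant or one-to-one property itself.
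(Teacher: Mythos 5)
Your proposal is correct and is essentially the paper's own proof: the same fibres $[T_\alpha] = f^{-1}(\{x_\alpha\})$ of the ``large disjoint antichains'' witness serve as the antichain, and applying the constant or one-to-one property to $f \re [T]$ yields the two alternatives directly. The extra care you take with the passage from $[S] \sub [T_\alpha]$ to $S \leq T_\alpha$ is a reasonable bookkeeping point that the paper leaves implicit, but it does not change the argument.
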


\begin{proof}
Let $f : \omom \to \twoom $ be a continuous function witnessing ``large disjoint antichains" of $\TT$. 
Let $\{ x_\alpha : \alpha < \cc \}$ be an enumeration of $\twoom$. Let $T_\alpha \in \TT$ be
such that $[T_\alpha] = f^{-1} ( \{ x_\alpha \} )$. We check that $(T_\alpha : \alpha < \cc)$ witnesses
the selective disjoint antichain property. Clearly $[T_\alpha] \cap [T_\beta] = \emptyset$ for $\alpha
\neq \beta$. Given $T \in \TT$, find $S \leq T$ such that $f \re [S]$ is constant or one-to-one.
In the first case, $S \leq T_\alpha$ for some $\alpha$, and in the second case, $| [S] \cap [T_\alpha] |
\leq 1$ for all $\alpha$, and we are done.
\end{proof}

We are finally ready to complete the proof of the main result of this note.

\begin{cor}   \label{Laver-Miller-cof}
$cf (\cof (\ell^0)) > \cc$ and $cf (\cof (m^0)) > \cc$.
\end{cor}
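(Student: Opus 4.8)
The plan is to simply assemble the components established earlier in the paper, since the corollary is a direct specialization of Theorem~\ref{sdap} to the two concrete forcings $\LL$ and $\MM$. No new construction is needed; the task is to verify that each hypothesis in the chain Theorem~\ref{Laver-min}/\ref{Miller-min} $\Rightarrow$ Proposition~\ref{constant-11} $\Rightarrow$ Theorem~\ref{sdap} is in place for both forcings.

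First I would recall from Section~1 that both Laver forcing $\LL$ and Miller forcing $\MM$ are combinatorial tree forcings in the sense of Definition~1: their conditions are subtrees of $\omlom$, each contains $\omlom$ itself and is closed under the subtree operation $T \mapsto T_s$, homogeneity holds for these tree forcings, and the map $f(x)(n) = x(n) \bmod 2$ is explicitly noted to witness ``large disjoint antichains" for both. Thus the abstract framework underlying Theorem~\ref{sdap} and Proposition~\ref{constant-11} genuinely applies to $\LL$ and $\MM$.

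Next I would invoke the two minimality-type results. By Theorem~\ref{Laver-min} (Gray), $\LL$ has the constant or one-to-one property, and by Theorem~\ref{Miller-min} (Miller), so does $\MM$. Feeding each of these into Proposition~\ref{constant-11} yields that both $\LL$ and $\MM$ have the selective disjoint antichain property: taking the witnessing continuous $f$ above and enumerating $\twoom$ as $\{x_\alpha : \alpha < \cc\}$, the fibres $[T_\alpha] = f^{-1}(\{x_\alpha\})$ form a disjoint antichain, and the constant-or-one-to-one dichotomy supplies exactly the two alternatives ($S \leq T_\alpha$ for some $\alpha$, or $|[S] \cap [T_\alpha]| \leq 1$ for all $\alpha$) demanded by the selective disjoint antichain property.

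Finally I would apply Theorem~\ref{sdap} to each forcing in turn: since $\LL$ has the selective disjoint antichain property we obtain $cf(\cof(\ell^0)) > \cc$, and since $\MM$ has it we obtain $cf(\cof(m^0)) > \cc$. I expect no genuine obstacle at the level of the corollary itself, as all the real work has already been absorbed into Theorem~\ref{Laver-min}, whose fusion-and-rank construction is the hard step. The only point meriting a moment's care is confirming that $\LL$ and $\MM$ satisfy all four clauses of the combinatorial-tree-forcing definition (in particular homogeneity, used by both Proposition~\ref{constant-11} via the witnessing $f$ and by Theorem~\ref{sdap}), so that the two preceding results are legitimately applicable.
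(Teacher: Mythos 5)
Your proposal is correct and follows exactly the paper's own route: the paper proves the corollary by citing Theorem~\ref{Laver-min}, Theorem~\ref{Miller-min}, Proposition~\ref{constant-11}, and Theorem~\ref{sdap} in the same chain you describe. Your additional check that $\LL$ and $\MM$ satisfy Definition~1 (with $f(x)(n)=x(n)\bmod 2$ witnessing large disjoint antichains) is a sensible, if implicit in the paper, verification and raises no issues.
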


\begin{proof}
This follows from Theorem~\ref{Laver-min}, Theorem~\ref{Miller-min}, Proposition~\ref{constant-11},
and Theorem~\ref{sdap}.
\end{proof}



\section{Problems}

For some natural tree forcings, we still do not know whether the cofinality of the corresponding tree ideal is
larger than $\cc$ in ZFC. Let $\FM$ denote {\em full splitting Miller forcing}, originally introduced by~\cite{NR93}
(see also~\cite{KLta}), that is, conditions are Miller trees $T \sub \omlom$
such that whenever $s \in T$ is a splitting node, then $s \ha n \in T$ for all $n \in \omega$. 
$fm^0$ is the {\em full splitting Miller ideal}.

\begin{ques}  \label{ques2}
Is $\cof (fm^0) > \cc$?
\end{ques}

By the discussion in Section 2 (before Proposition~\ref{Laver-dmap}), we know this is true under CH.

More generally, one may ask:

\begin{ques}   \label{ques3}
Are there combinatorial tree forcings $\TT$ which consistently fail to have the disjoint maximal antichain property? Which consistently fail to 
satisfy $\cof (t^0) > \cc$? For which $t^0$ consistently has a Borel basis?
\end{ques}

Note that the existence of a Borel basis implies $\cof (t^0) = \cc$.
By the above comment $fm^0$ has no Borel basis under CH, but this is open in ZFC.
Question~\ref{ques3} is also of interest for tree forcings which do not necessarily satisfy all the clauses of
Definition 1, e.g., for non-homogeneous forcing notions.

By~\cite[Theorems 1.4 and 1.5]{JMS92}, we know that $\cof (s^0)$ can consistently assume arbitrary values $\leq 2^\cc$
whose cofinality is larger than $\cc$ and it is easy to see that the same arguments work for other tree ideals
like $m^0$ and $\ell^0$. (In these models CH holds.)

\begin{ques}  \label{ques4}
Can we consistently separate the cofinalities of different tree ideals? E.g., are $\cof (s^0) < \cof (m^0)$ or
$\cof (m^0) < \cof (s^0)$ consistent?
\end{ques}



\end{document}